%% file: root.tex
\documentclass[journal,twoside,web]{ieeecolor}
\usepackage[dvipsnames]{xcolor}
\usepackage{generic}
\usepackage{lcsys}

\usepackage{graphicx}
\usepackage{epsfig}
\usepackage{subfigure}
\usepackage{cite}
\usepackage{textcomp}

\usepackage{amsmath,amssymb,amsfonts}
\usepackage{mathtools}
\usepackage{mathrsfs}
\usepackage{stmaryrd}
\usepackage{bm}
\usepackage{physics}

\usepackage{algorithm}
\usepackage{algpseudocode}

\usepackage{float}
\usepackage{tabularx}
\usepackage{booktabs}
\usepackage{placeins}
\usepackage{pgfplots}
\usepackage{pgfplotstable}
\usepgfplotslibrary{statistics}
\usepgfplotslibrary{groupplots}
\usepgfplotslibrary{dateplot}
\usepgfplotslibrary{fillbetween}
\usetikzlibrary{calc}
\pgfplotsset{compat=1.18}

\usepackage{tcolorbox}
\usepackage{epigraph}
\usepackage{matlab-prettifier}
\usepackage{setspace}
\usepackage{musicography}
\usepackage{xurl}

\usepackage[colorlinks=true, linkcolor=black, citecolor=black, urlcolor=black]{hyperref}

\usepackage{caption}
\usepackage[acronym]{glossaries}
\makeglossaries
\glsdisablehyper

\newacronym{LTI}{LTI}{Linear Time-Invariant}
\newacronym{LTV}{LTV}{Linear Time-Varying}
\newacronym{DeePC}{DeePC}{Data-enabled Predictive Control}
\newacronym{SVD}{SVD}{singular value decomposition}
\newacronym{PCA}{PCA}{principal component analysis}
\newacronym{GREAT}{GREAT}{Grassmannian REcursive Algorithm for Tracking}
\newacronym{VTVL}{VTVL}{Vertical Takeoff, Vertical Landing}
\newacronym{4SID}{4SID}{subspace-based methods for state-space system identification}
\newacronym{RGD}{RGD}{Riemannian gradient descent}
\newacronym{SISO}{SISO}{single-input single-output}
\newacronym{ARX}{ARX}{AutoRegressive model with eXogenous inputs}
\newacronym{FROSL}{FROSL}{Flag Recursive Online Subspace Learning}

\input{user_command}

\begin{document}

\def\BibTeX{{\rm B\kern-.05em{\sc i\kern-.025em b}\kern-.08em
    T\kern-.1667em\lower.7ex\hbox{E}\kern-.125emX}}

\markboth{\journalname, VOL. XX, NO. XX, XXXX 2026}
{Jin and Coulson: On the Sensitivity of the Subspace Predictor to Behavioral Perturbations}

\title{On the Sensitivity of the Subspace Predictor to Behavioral Perturbations}

\author{
Dian Jin$^{1}$, Jeremy Coulson$^{1}$
\thanks{$^{1}$Both authors are with the Department of Electrical and Computer Engineering,
University of Wisconsin--Madison, Madison, WI, USA.
Emails: \{djin38, jeremy.coulson\}@wisc.edu.}
}

\maketitle
\thispagestyle{empty}
\pagestyle{empty}

\begin{abstract}
Behavioral systems define discrete-time \gls{LTI} systems in terms of a set of trajectories, which forms a linear subspace.
This subspace underlies the subspace predictor used in data-driven prediction and control.
In practice, such subspaces are typically represented through data matrices.
For robustness certification and uncertainty quantification, however, these matrix representations are coordinate-dependent and therefore do not provide a coordinate-free way to quantify uncertainty.
In this work, we derive an explicit prediction error bound in terms of behavioral distance between the true subspace and an estimate, showing that the predictor is locally Lipschitz with respect to behavioral perturbations.
We also present a one-step prediction error bound that is relevant for receding-horizon implementations, which becomes computable when combined with existing behavioral-distance certificates.
Numerical studies show that our bound is tighter than an existing data-matrix perturbation bound, and remains computable, though more conservative, when combined with an existing behavioral distance certificate.
\end{abstract}

\begin{IEEEkeywords}
Uncertainty quantification, Behavioral systems, subspace predictor, data-driven control.
\end{IEEEkeywords}


\input{final_submission/final_version}


\bibliographystyle{IEEEtran}
\bibliography{ref}

\end{document}

%% file: user_command.tex
\newcommand{\up}{{u_{\mathrm{p}}}}
\newcommand{\yp}{{y_{\mathrm{p}}}}
\newcommand{\uf}{{u_{\mathrm{f}}}}
\newcommand{\yf}{{y_{\mathrm{f}}}}

\newcommand{\im}{\mathrm{im~}}

\newcommand{\ini}{\mathrm{ini}}

\newcommand{\rmd}{\mathrm{d}}

\newcommand{\Tf}{T_\mathrm{f}}
\newcommand{\Tini}{T_\mathrm{ini}}

\newcommand{\pred}{\mathrm{pred}}

\newtheorem{theorem}{Theorem}
\newtheorem{proposition}{Proposition}

\newtheorem{lemma}{Lemma}
\newtheorem{definition}{Definition}

\newtheorem{problem}{Problem}

\newtcolorbox{noframebox}{
  colback=PeachRed!20,   
  boxrule=0pt,       
  sharp corners,      
  left=4pt, right=4pt, top=6pt, bottom=6pt,
}

\newcommand{\ga}{\gamma}

\newcommand{\R}{\mathbb{R}}

\newcommand{\Z}{\mathbb{Z}}
\newcommand{\Zp}{\mathbb{Z}_{\geq 0}}

\newcommand{\rmO}{\mathrm{O}}

\newcommand{\bmU}{\mathbf{U}}

\newcommand{\bmV}{\mathbf{V}}

\newcommand{\calB}{\mathcal{B}}

\newcommand{\calH}{\mathcal{H}}

\newcommand{\calN}{\mathcal{N}}
\newcommand{\calO}{\mathcal{O}}

\newcommand{\calS}{\mathcal{S}}
\newcommand{\calT}{\mathcal{T}} 

\renewcommand{\hat}{\widehat}

\definecolor{WiscRed}{RGB}{180, 42, 26}

\definecolor{PeachRed}{RGB}{231, 143, 129}

\DeclareMathOperator*{\diag}{diag}  

\newcommand{\Gr}{\mathrm{Gr}}   


%% file: final_submission/final_version.tex
\section{Introduction}
\IEEEPARstart{P}{rediction} from data is a canonical problem across machine learning, signal processing, and control, where a common theme is to exploit low-complexity structure for forecasting and decision making. In systems and control, this viewpoint has led to subspace-based methods for system identification and data-driven prediction/control \cite{van2012subspace, markovsky2008data, coulson2019data}. Behavioral systems theory \cite{willems1986time, markovsky2021behavioral} provides a trajectory-based representation of finite-horizon \gls{LTI} systems: the set of admissible trajectories forms a linear subspace that can be constructed directly from data. The subspace predictor \cite{markovsky2008data} uses this behavioral subspace to predict future outputs from past input-output data and prescribed future inputs.

Robustness and uncertainty quantification for data-driven prediction can be approached either at the data-matrix level \cite{kaviani2025uncertainty} or at the behavioral-subspace level \cite{behavioraluq}. 
The latter viewpoint has become increasingly relevant in recent work, especially when the object produced by the algorithm is itself a subspace estimate. For example, online subspace estimation methods track time-varying behavioral subspaces \cite{sasfi2026great, jin2025online}, while robust least-squares formulations for data-driven control \cite{bharadwaj2025robust} model uncertainty through geometric neighborhoods of a nominal behavior. In these settings, uncertainty is most naturally described by the distance between the true and estimated behavioral subspaces \cite{fazzi2023distance}. These developments motivate a coordinate-free sensitivity analysis directly at the level of behaviors. Since subspace uncertainty is naturally measured by distances such as  gap metric \cite{ball2006equivalence, bian2008intrinsic} (also see \cite{padoan2025distances} and references therein)  or chordal distance, whereas prediction performance is measured by Euclidean output error, it remains necessary to quantify how geometric discrepancies between behaviors affect the resulting predictions.

Recent work \cite{kaviani2025uncertainty} derives 
computable prediction error bounds under data matrix perturbations caused by additive output noise.
In contrast, we study perturbations at the level of behavior itself. This geometric viewpoint is more general than an additive-noise model: it can capture uncertainty arising from truncated \gls{SVD} preprocessing \cite{alsalti2024robust, kaviani2025uncertainty}, online subspace identification \cite{sasfi2026great}, and geometric uncertainty sets used in robust data-driven control \cite{bharadwaj2025robust}. 
Our goal is to quantify how a geometric error between the true and estimated behavior propagates to the predicted output. 
When combined with computable chordal
distance bounds such as \cite[Theorem 9]{alsalti2024robust} or with subspace-tracking guarantees \cite{sasfi2026great}, our result yields
prediction-error certificates that are computable from data and noise bounds.

The contribution of this work is twofold.
First, we quantify in Theorem \ref{thm:error-bound} the sensitivity of the subspace predictor to behavioral perturbations by deriving an explicit upper bound on the prediction error in terms of the behavioral distance between the ground-truth subspace and its approximation. The bound also depends on a quantitative observability property of the system.
Second, we provide a data-dependent prediction-error bound whose constants are computable from the estimated subspace in Theorem \ref{thm:one-step-pred-error-known}.
Thus, our results convert geometric uncertainty certificates into prediction-error certificates.


The letter is organized as follows: Section \ref{sec:motivation} formalizes the problem of interest. Section \ref{sec:error-bound} contains the main results (Theorem \ref{thm:error-bound} and  Proposition \ref{thm:sp-rotinvar}). 
Section \ref{sec:numerical} empirically studies the sensitivity of the subspace prediction in terms of the behavioral distance through a numerical case study.

\textit{Notation:}
The set of all nonnegative integers is denoted $\Zp$. 
Given $i, j, T \in \Zp$ with $i<j$ and a sequence 
$\smash{\{z(t)\}_{t=0}^{T-1}} \subset \R^n$, define $[i,j]=[i, i+1, \dots, j-1, j]$ and $z_{[i,j]} \coloneqq [z(i)^\top ~ z(i+1)^\top ~ \cdots ~ z(j)^\top]^\top.$
We use $\| \cdot \|_2$ to denote both the Euclidean norm for vectors and the spectral norm for matrices, and $\| \cdot \|_F$ to denote the Frobenius norm. For $k \leq T$, let $\calH_k(z_{[0,T-1]})$ denote the block-Hankel matrix whose $(\ell+1)^{\text{th}}$ column is
$z_{[\ell,\ell+k-1]}$, for $\ell=0,\ldots,T-k$.
Given $T \in \Zp$, the sequence $z_{[0,T-1]}$ is called \textit{persistently exciting} of order $k$ if $\calH_k(z_{[0,T-1]})$ has full row rank. The Moore-Penrose inverse of a matrix $M$ is denoted by $M^\dag$.
We use boldface $\bmU$ to denote a linear subspace of $\R^q$, and plain $U$ to denote a spanning matrix. These notations are connected by $\im U = \bmU$, and will be used interchangeably.
The set of orthogonal matrices is denoted $\rmO(d)=\{U \in \R^{d \times d}: U^\top U = I\}$. The set of all singular values of a matrix $U$ is denoted by $\sigma(U)$.
The $i^\text{th}$ singular value of a matrix $U$ is denoted by $\sigma_i(U)$. The diagonal matrix with diagonal entries $\ga_1,\cdots,\ga_d$ is denoted $\diag(\ga_1,\cdots,\ga_d)$. 
The Grassmannian is defined as $\Gr(q, d)\coloneqq \{\bmU $ is a linear subspace of $\R^q: \dim \bmU = d \}.$ Let $\bmU, \bmV \in \Gr(q, d)$ and let $U, V \in \R^{q \times d}$ be orthonormal bases spanning $\bmU$ and $\bmV$, respectively. 
    Consider the compact \gls{SVD} $U^\top V = P \Sigma Q^\top$, where $\Sigma=\diag(\sigma_1,\dots,\sigma_d)$ with $\sigma_1 \geq \dots \geq \sigma_d \geq 0$.
    The principal angles $\theta_1, \dots, \theta_d \in [0, \pi/2]$ between $\bmU$ and $\bmV$ are given by \cite{golub2013matrix}
    $\cos \theta_i = \sigma_{i}$. The \textit{chordal distance} between $\bmU, \bmV$ is defined equivalently by 
    \begin{equation}\label{eq:chord}
        d(\bmU, \bmV) = \left(\sum_{i=1}^d \sin^2 \theta_i\right)^{\frac{1}{2}} = 
        \frac{1}{\sqrt{2}}\norm{UU^\top - VV^\top}_F.
    \end{equation}
Note that the chordal distance is  coordinate-free: $d(\im (UQ), \im (VQ)) = d(\im U, \im V)$ for all $Q \in \rmO(d)$.
The Gaussian distribution with mean $0$ and covariance $\Sigma \in \R^{d \times d}$ is denoted $\calN(0, \Sigma)$.
\section{Problem Statement}\label{sec:motivation}
Consider the observable \gls{LTI} dynamical system 
\begin{equation}\label{eq:LTI-1}
\begin{aligned}
    x(t+1) &= A x(t) + B u(t), \\
    y(t) &= C x(t) + D u(t),
\end{aligned}
\end{equation}
where $A \in\R^{n\times n}$, $B\in\R^{n\times m}$, $C\in\R^{p\times n}$, $D\in\R^{p\times m}$, $x(t)$ is the state, $u(t)$ is the input, $y(t)$ is the output at time $t \in \Zp$. The \textit{restricted behavior} of \eqref{eq:LTI-1} is defined as
\begin{equation*}
    \begin{aligned}
    &\calB_{[0,L-1]} = \bigg\{
    \left(u_{[0,L-1]}, y_{[0,L-1]} \right) \in \R^{(m+p)L} 
    \bigg|~ \text{there exists} \\
    &x_{[0,L]} \in \R^{n(L+1)} 
    \text{ such that \eqref{eq:LTI-1} holds for all $t \in [0,L-1]$} \bigg\}.
    \end{aligned}
\end{equation*}
Any $(u_{[0,L-1]}, y_{[0,L-1]}) \in \calB_{[0,L-1]}$ is called a \textit{trajectory} of system \eqref{eq:LTI-1}.
The following lemma \cite{markovsky2021behavioral} shows that $\calB_{[0,L-1]}$ is a linear subspace of $\R^{(m+p)L}$ and provides a data matrix representation.
\begin{lemma}[{\cite[Corollary 19]{markovsky2022identifiability}}]\label{thm:fund-lemma}
    Consider system \eqref{eq:LTI-1}. 
    Let $(A, B)$ be controllable and $L \geq n$. Then $\calB_{[0,L-1]}$ is a linear subspace of dimension $\dim \calB_{[0,L-1]} = mL+n$. Moreover, let $T \in \Z_{>0}$ and $(u_{[0,T-1]}, y_{[0,T-1]}) \in \calB_{[0,L-1]}$ with $u_{[0,T-1]}$ being persistently exciting of order $n+L$, then
    \[
    \operatorname{im}
    \begin{bmatrix}
        \calH_L(u_{[0,T-1]}) \\
        \calH_L(y_{[0,T-1]}) 
    \end{bmatrix} = \calB_{[0,L-1]}.
    \]
\end{lemma}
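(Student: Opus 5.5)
The plan is to prove the two claims separately. The first is that $\calB_{[0,L-1]}$ is a subspace of dimension $mL+n$. The second is that the Hankel matrix spans it.

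\textbf{Linearity and dimension.} Iterating \eqref{eq:LTI-1} gives the map
\[
(x(0),u_{[0,L-1]}) \mapsto \bigl(u_{[0,L-1]},\, \calO_L x(0) + \calT_L u_{[0,L-1]}\bigr).
\]
Here $\calO_L=\mathrm{col}(C,CA,\dots,CA^{L-1})$ is the extended observability matrix and $\calT_L$ is the block-Toeplitz matrix of Markov parameters. This map is linear, and $\calB_{[0,L-1]}$ is exactly its image, so it is a subspace. If the map is injective, the dimension is $mL+n$. Injectivity holds when $u=0$ and $\calO_L x(0)=0$ force $x(0)=0$. Since \eqref{eq:LTI-1} is assumed observable and $L\ge n$, $\calO_L$ has full column rank $n$ by Cayley--Hamilton. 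So the map is injective and $\dim\calB_{[0,L-1]}=mL+n$.

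\textbf{Hankel span.} Each column of the stacked Hankel matrix is a length-$L$ window of a trajectory, so it lies in $\calB_{[0,L-1]}$. Hence the image is contained in $\calB_{[0,L-1]}$. For the reverse inclusion it suffices to show that the Hankel matrix has rank $mL+n$. Writing each column as the linear map above applied to $(x(\ell), u_{[\ell,\ell+L-1]})$, the stacked Hankel matrix factors as
\[
\begin{bmatrix} I & 0\\ \calT_L & \calO_L\end{bmatrix}
\begin{bmatrix}\calH_L(u)\\ X\end{bmatrix},
\qquad X=[x(0)\ \cdots\ x(T-L)].
\]
The left factor is injective, as shown above. The task therefore reduces to showing that $\begin{bmatrix}\calH_L(u)\\ X\end{bmatrix}$ has full row rank $mL+n$. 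This is the classical Willems fundamental lemma argument. Suppose a row vector $[\eta^\top\ \xi^\top]$ annihilates this matrix. The argument uses controllability of $(A,B)$ and persistency of excitation of order $n+L$ to force $\eta=0$ and $\xi=0$.

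\textbf{Main obstacle: the fundamental lemma step.} The plan is the standard Willems et al.\ proof. Use the depth-$(n+L)$ Hankel matrix of $u$, together with the fact that $x(\ell+k)$ is a linear combination of $x(\ell)$ and $u_{[\ell,\ell+k-1]}$. Shift the annihilator $n$ times to build $n+1$ left annihilators of the stacked depth-$(L+n)$ matrix of $[\calH_{L+n}(u);X]$. Then eliminate the state using the characteristic polynomial of $A$ (Cayley--Hamilton). This produces a nonzero left annihilator of $\calH_{L+n}(u)$ alone, unless $\xi^\top[B\ AB\ \cdots]$ vanishes. Controllability rules out the latter, and persistency of excitation of order $n+L$ rules out the former. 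Getting the index bookkeeping right, so that the constructed annihilator is genuinely nonzero, is the delicate step.

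Alternatively, since the statement is quoted from \cite[Corollary 19]{markovsky2022identifiability}, one may simply invoke that result together with the fundamental lemma.
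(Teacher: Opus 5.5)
Your proposal is correct and is the standard argument behind the result that the paper only cites: the paper gives no proof beyond \cite[Corollary 19]{markovsky2022identifiability}, and your two steps are that argument. The dimension count comes from injectivity of $(x(0),u)\mapsto(u,\calO_L x(0)+\calT_L u)$ under the standing observability assumption, and the span claim comes from your factorization (the paper's identity \eqref{eq:state-space-behavior} with the two block columns of $\Phi_L$ swapped) together with the state-space fundamental lemma. One wording fix in your sketch: the Cayley--Hamilton-combined annihilator of $\calH_{L+n}(u)$ vanishes only if both $\eta=0$ (read off from its last $L$ blocks, a triangular system with unit diagonal since the characteristic polynomial is monic) and $\xi^\top A^iB=0$ for $i=0,\dots,n-1$ (then read off from its first $n$ blocks), so persistency of excitation forces both, and controllability then gives $\xi=0$.
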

\noindent
Lemma \ref{thm:fund-lemma} provides the foundation for viewing systems over finite horizons as subspaces \cite{padoan2025distances}. 
The data-driven prediction problem posed in \cite{markovsky2008data} aims to find the output sequence of an unknown dynamical system corresponding to an input sequence 
based on a past input-output trajectory collected from the system, without explicitly identifying a state-space model. 
We now introduce the subspace predictor, which is the key ingredient of data-driven prediction and the main object studied in this work.
\begin{definition}\label{def:subspace-predictor}
    Let $X \in \R^{(m+p)(\Tini+\Tf) \times r}, r \in \Z_{>0}$. Let $(u_\ini, u, y_\ini) \in \R^{m\Tini + m\Tf + p\Tini}$. 
    Partition $X$ into a block matrix
    \[
    X = \begin{bmatrix}
    X_{\up}^\top &
    X_{\uf}^\top &
    X_{\yp}^\top &
    X_{\yf}^\top 
    \end{bmatrix}^\top,
    \]
    where $X_{\up} \in \R^{m\Tini \times r}, X_{\uf} \in \R^{m\Tf \times r}, X_{\yp} \in \R^{p\Tini \times r}, X_{\yf} \in \R^{p\Tf \times r}.$
    The subspace predictor is defined as the mapping 
    \begin{equation}\label{eq:subspace-predictor}
    \begin{aligned}
        &\calS: \R^{(m+p)(\Tini+\Tf) \times r} \times \R^{m\Tini + m\Tf + p\Tini} \to \R^{p\Tf}, \\
    &\calS(X, u_\ini, u, y_\ini) =  
    X_{\yf}
    \begin{bmatrix}
    X_{\up} \\[2pt]
    X_{\uf} \\
    X_{\yp}
    \end{bmatrix}^\dag 
    \begin{bmatrix}
        u_\ini \\ u \\ y_\ini
    \end{bmatrix}.        
    \end{aligned}
    \end{equation}
\end{definition}
\noindent

Throughout this paper, the subscripts $\up, \uf, \yp, \yf$ always refer to this block partition for any matrices with $(m+p)(\Tini+\Tf)$ rows.
When the vector $(u_\ini, u, y_\ini)$ is clear from the context, we equivalently write $y^\pred_{[0,\Tf-1]}(X) \coloneqq (y^\pred_0(X), \dots, y^\pred_{\Tf-1}(X))$ for $\calS(X, u_\ini, u, y_\ini)$. 
The subspace predictor can be used to predict future output trajectories of a dynamical system given future input $u \in \R^{m\Tf}$ and an initial trajectory $(u_\ini, y_\ini) \in \calB_{[0,\Tini-1]}$ \cite{markovsky2008data}. 
We wish to study how the perturbation to $\calB_{[0,\Tini+\Tf-1]}$ affects the subspace prediction. 
Let $\hat{\calB}_{[0,\Tini+\Tf-1]}$ be the behavior of an approximate system
\begin{equation}\label{eq:LTI-2}
\begin{aligned}
    \hat{x}(t+1) &= \hat{A} \hat{x}(t) + \hat{B} u(t), \\
    \hat{y}(t) &= \hat{C} \hat{x}(t) + \hat{D} u(t),
\end{aligned}
\end{equation}
where $\hat{A} \in\R^{n\times n}$, $\hat{B}\in\R^{n\times m}$, $\hat{C}\in\R^{p\times n}$, $\hat{D}\in\R^{p\times m}$.
Let $u^\rmd_{[0,T-1]}$ be persistently exciting of order $n+\Tini+\Tf$, 
let $(u^\rmd_{[0,T-1]}, y^\rmd_{[0,T-1]}) \in \calB_{[0,\Tini+\Tf-1]}$ and $(u^\rmd_{[0,T-1]}, \hat{y}^\rmd_{[0,T-1]}) \in \hat{\calB}_{[0,\Tini+\Tf-1]}$.
We arrange them into Hankel matrices
\begin{equation}\label{eq:hankel-data}
\resizebox{0.9\columnwidth}{!}{$
H \coloneqq
\begin{bmatrix}
    \calH_{\Tini+\Tf}(u^\rmd_{[0,T-1]}) \\[3pt]
    \calH_{\Tini+\Tf}(y^\rmd_{[0,T-1]})
\end{bmatrix},
\hat{H} \coloneqq
\begin{bmatrix}
    \calH_{\Tini+\Tf}(u^\rmd_{[0,T-1]}) \\[3pt]
    \calH_{\Tini+\Tf}(\hat{y}^\rmd_{[0,T-1]})
\end{bmatrix}.
$}
\end{equation}
Given $(u_\ini, y_\ini) \in \calB_{[0, \Tini - 1]}$ and an input sequence $u \in \R^{m\Tf}$, 
we denote the predicted output sequence associated with these Hankel matrices by ${y}^\pred_{[0, \Tf-1]}(H)$ and $y^\pred_{[0, \Tf-1]}(\hat{H})$.
Note that $(u_\ini, y_\ini)$ need not be a $\Tini$-length trajectory of \eqref{eq:LTI-2}, since the subspace predictor $\calS$ is defined for arbitrary $(u_\ini, y_\ini) \in \R^{(m+p)\Tini}$. This corresponds to the practical setting in which the approximate subspace, $\hat{\calB}_{[0,\Tini+\Tf-1]}$, is used for prediction. 
Denote $b=(u_\ini, u, y_\ini)$.
Denote $M=[H_\up^\top H_\uf^\top H_\yp^\top]^\top$ and similar for $\hat{M}$, the prediction error can be first bounded as 
\begin{equation}\label{eq:first-error-bound}
    \begin{aligned}
    &\norm{y_{[0, \Tf-1]}^\pred(\hat{H})-y^\pred_{[0, \Tf-1]}(H)}_2 \\
    \leq &\left(\norm{\hat{H}_\yf}_2 \norm{\hat{M}^\dag - M^\dag}_2 + \norm{\hat{H}_\yf - H_\yf}_F
    \norm{M^\dag}_2\right) \norm{b}_2 \\
    \leq &\left(\norm{\hat{H}_\yf}_2 \norm{\hat{M}^\dag - M^\dag}_2 + \norm{\hat{H} - H}_F
    \norm{M^\dag}_2\right) \norm{b}_2
    \end{aligned},
\end{equation}
where the last inequality uses the fact that $\hat{H}_\yf$ and $H_\yf$ are submatrices of $\hat{H}$ and $H$, respectively. However, the bound in \eqref{eq:first-error-bound} is still expressed in terms of matrix norms. As such, it is not intrinsic to the underlying restricted behaviors, and the bound may vary under different matrix representations of the same subspace. Since our goal is to relate prediction error to a geometric discrepancy between restricted behaviors, 
it is natural to seek a  coordinate-free  bound formulated directly in terms of $d( \im \hat{H}, \im H)$, see Fig. \ref{fig:pred-error-illust}. 
\begin{figure}[t]
    \centering
    \includegraphics[width=0.5\columnwidth]{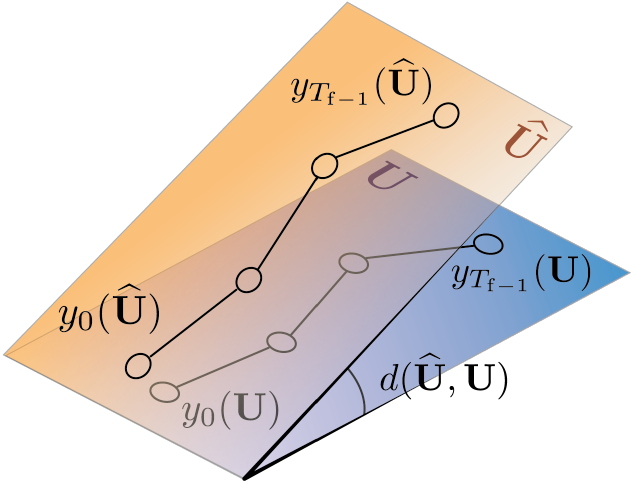}
    \caption{The two planes represent the nominal restricted behavior $\bmU$ and its approximation $\hat{\bmU}$, with discrepancy measured by $d(\hat{\bmU}, \bmU)$.}
    \label{fig:pred-error-illust}
\end{figure}
Because each restricted behavior is a linear subspace by Lemma \ref{thm:fund-lemma}, we use chordal distance $d$ to measure discrepancy between restricted behaviors, and will refer to chordal distance and behavioral distance interchangeably. 
This gives rise to the central question:
\begin{problem}\label{problem}
    How do perturbations measured in chordal distance between restricted behaviors affect the subspace prediction? More precisely, how can the prediction error
    \[
    \norm{y_{[0, \Tf-1]}^\pred(H) - y_{[0, \Tf-1]}^\pred(\hat{H})}_2
    \]
    be bounded in terms of $d(\im H, \im \hat{H})$?
\end{problem}

\noindent
We address this problem in Section \ref{sec:error-bound}. 
We first examine how observability properties of \eqref{eq:LTI-1} affect the prediction error bound given by \eqref{eq:error-bound}, then we show that the subspace predictor $\calS$ is  coordinate-free  ( Proposition \ref{thm:sp-rotinvar}).
Finally, we relate the Frobenius norm to behavioral distance through a suitable basis representation (Lemma \ref{lemma:fro-chordal}), which leads to a novel prediction error bound (Theorem \ref{thm:error-bound}).
\section{Prediction Error Bound}\label{sec:error-bound}
We introduce a state-space formulation of the restricted behavior. For $k \in \Zp$, define the extended observability matrix and block Toeplitz matrix as 
\[
\mathcal{O}_k =
\begin{bmatrix}
C \\ CA \\ \vdots \\ CA^{k-1}
\end{bmatrix},
\mathcal{T}_k
= \begin{bmatrix}
D         & 0         & \cdots & 0 \\
CB        & D         & \cdots & 0 \\
\vdots    & \vdots    & \ddots & \vdots \\
CA^{k-2}B & CA^{k-3}B & \cdots & D
\end{bmatrix}.
\]
Let $(u_{[0,L-1]},y_{[0,L-1]}) \in \calB_{[0,L-1]}$ with corresponding state sequence $x_{[0,L]}$.
Denote \[
\Phi_L \coloneqq \begin{bmatrix}
0 & I_{mL} \\
\mathcal{O}_L & \mathcal{T}_L
\end{bmatrix}, 
\]
then
\begin{equation}\label{eq:state-space-behavior}
\begin{bmatrix}
\mathcal{H}_L\!\left(u_{[0,T-1]}\right) \\[2pt]
\mathcal{H}_L\!\left(y_{[0,T-1]}\right)
\end{bmatrix}
=
\Phi_L
\begin{bmatrix}
\mathcal{H}_1\!\left(x_{[0,T-L]}\right) \\[2pt]
\mathcal{H}_L\!\left(u_{[0,T-1]}\right)
\end{bmatrix}.
\end{equation}
It is shown in \cite[Corollary 19]{markovsky2022identifiability} that $\im \Phi_L = \calB_{[0,L-1]}$.
We refer to $\Phi_L$ as the \textit{trajectory generation matrix} of \eqref{eq:LTI-1}. 
The main theorem below assumes a lower bound on $\sigma_{\min}(\Phi_L)$, a quantity tied to the observability property of \eqref{eq:LTI-1}, which is analyzed in Section \ref{sec:sigma_min-M}.
We now state the main theorem.
\begin{theorem}\label{thm:error-bound}
Consider system \eqref{eq:LTI-1}. Let $\Tini, \Tf \in \Z_{>0}$ and let $\Tini \geq n$. Let $\Phi_{\Tini}$ and $\Phi_{\Tini+\Tf}$ be trajectory generation matrices of \eqref{eq:LTI-1}.
Suppose there exists $\beta>0$ such that $\sigma_{\min}(\Phi_{\Tini}) \geq \beta$ and let $\sigma_{\max}(\Phi_{\Tini + \Tf}) = \alpha > 0$.
Let $(u_\ini, y_\ini) \in \calB_{[0,\Tini-1]}$, $u \in \R^{m\Tf}$.
Let $\bmU = \calB_{[0,\Tini+\Tf-1]}$ and $\hat{\bmU} \in \Gr((m+p)(\Tini+\Tf), \dim \bmU).$
Denote $\ga = \min(1, \beta)/\alpha$.
If $d({\bmU}, \hat{\bmU}) \leq \ga / 2\sqrt{2}$, 
then 
\begin{equation}\label{eq:error-bound}
    \begin{aligned}
    &\norm{y_{[0,\Tf-1]}^\pred(\hat{\bmU}) - y_{[0,\Tf-1]}^\pred(\bmU)}_2 \leq \\
    &\left(
    \frac{2(1+\sqrt{5})}{\ga^2} +
    \frac{1}{\ga}
    \right)
    \sqrt{2}d(\hat{\bmU}, \bmU)\,
    \|(u_\ini, u, y_\ini)\|_2 .
    \end{aligned}
\end{equation}
\end{theorem}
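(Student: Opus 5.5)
Our approach is to reduce the subspace-level statement to the matrix-level bound \eqref{eq:first-error-bound}, applied to a carefully chosen pair of orthonormal bases. First we need that the predictor depends only on the image of $X$. This is the content of Proposition~\ref{thm:sp-rotinvar}: if $\im X = \im X'$, then $X_\yf M_X^\dag b = X'_\yf M_{X'}^\dag b$. For $b$ consistent with $\bmU$ this holds because the column space fixes the affine solution set. In general we check it directly: write $X' = XG$ with $G$ invertible when both are bases, and use that $\ker M_X \subseteq \ker X_\yf$ whenever $M_X$ has full column rank. Once this is established, $y^\pred(\bmU)$ is well defined, and we may evaluate both predictions at orthonormal bases $U$ of $\bmU$ and $\hat U$ of $\hat\bmU$ that are aligned via the SVD $U^\top\hat U = P\Sigma Q^\top$. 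We replace $U$ by $UP$ and $\hat U$ by $\hat UQ$. Then $U^\top\hat U = \Sigma = \diag(\cos\theta_i)$, and
\[
\|U-\hat U\|_F^2 = \sum_i 2(1-\cos\theta_i) \le 2\sum_i \sin^2\theta_i = 2d(\bmU,\hat\bmU)^2 ,
\]
so $\|U-\hat U\|_F\le \sqrt2\, d$. This is our Lemma~\ref{lemma:fro-chordal}, and it is where the factor $\sqrt2 d$ in \eqref{eq:error-bound} comes from.

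Next we apply \eqref{eq:first-error-bound} with $H=U$ and $\hat H=\hat U$, which gives
\[
\big(\|\hat U_\yf\|_2\|\hat M^\dag - M^\dag\|_2 + \|\hat U - U\|_F\|M^\dag\|_2\big)\|b\|_2 .
\]
Since $\hat U$ has orthonormal columns, $\|\hat U_\yf\|_2\le 1$. The key quantity is $\sigma_{\min}(M)$, where $M$ is the top block of the orthonormal basis $U$. Write $U = \Phi_{\Tini+\Tf}G$ with $G$ invertible. Orthonormality forces $\sigma_{\min}(G)\ge 1/\alpha$. The rows of $M$ (the $u_\ini,u,y_\ini$ rows) are the rows of $\Phi_{\Tini+\Tf}$ restricted to them. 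After a permutation these equal $\begin{bmatrix}\Phi_\Tini & 0\\ \ast & I\end{bmatrix}$ acting on $(x,u_{[0,\Tini-1]},u_{[\Tini,\Tini+\Tf-1]})$, with the $I$ block coming from the future input rows. Bounding the smallest singular value of this block-triangular matrix from below by a constant of order $\min(1,\beta)$ then gives $\sigma_{\min}(M)\ge\min(1,\beta)/\alpha=\ga$. Here $\Tini\ge n$ is what makes $\Phi_\Tini$ injective. Consequently $\|M^\dag\|_2\le 1/\ga$, which produces the term $\tfrac1\ga\sqrt2 d$.

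For the pseudoinverse term, the hypothesis $d\le \ga/(2\sqrt2)$ gives $\|\hat M-M\|_2\le\|\hat U-U\|_F\le \ga/2$. By Weyl's inequality $\sigma_{\min}(\hat M)\ge\ga/2$, so $\hat M$ also has full column rank and the ranks agree. Wedin's perturbation bound for pseudoinverses of equal rank then gives
\[
\|\hat M^\dag-M^\dag\|_2\le \tfrac{1+\sqrt5}{2}\,\|M^\dag\|_2\|\hat M^\dag\|_2\|\hat M-M\|_2\le \tfrac{1+\sqrt5}{2}\cdot\tfrac{1}{\ga}\cdot\tfrac{2}{\ga}\cdot \sqrt2 d .
\]
This yields a constant $(1+\sqrt5)/\ga^2$, comfortably within the stated $2(1+\sqrt5)/\ga^2$. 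Adding the two contributions gives \eqref{eq:error-bound}.

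We expect the main obstacle to be the lower bound $\sigma_{\min}(M)\ge\ga$. Getting exactly $\min(1,\beta)$ from the block-triangular structure needs care, because the off-diagonal block $\mathcal{T}$-type coupling could degrade a naive bound. Our first attempt is to note that $M$ contains the identity rows for future inputs together with rows forming $\Phi_\Tini$ on the remaining coordinates. Splitting any unit vector into past and future parts, at least one of the two pieces then controls the norm. If this gives only a weaker constant, the slack factor of $2$ in the statement's first term suggests the authors absorbed such a loss there. The secondary point to verify is the coordinate-freeness of the predictor for non-consistent $(u_\ini,y_\ini)$, which needs the full-column-rank property of $M$ just established.
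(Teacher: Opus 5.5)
Your skeleton is the paper's: coordinate-freeness of $\calS$, Procrustes alignment giving $\|\hat U-U\|_F\le\sqrt2\,d$, Weyl to keep $\sigma_{\min}(\hat M)\ge\ga/2$, a pseudoinverse perturbation bound, and $\sigma_{\min}(M)\ge\ga$ via $U=\Phi_{\Tini+\Tf}G$ with $\sigma_{\min}(G)\ge 1/\alpha$. Your use of Wedin's equal-rank product bound is valid, since both $M$ and $\hat M$ have full column rank. It even halves the paper's first constant, which the paper gets from the form $\max\{\|\hat M^\dag\|_2^2,\|M^\dag\|_2^2\}\le 4/\ga^2$.

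\textbf{The gap.} The step you flag as the main obstacle, $\sigma_{\min}(\Pi\Phi_{\Tini+\Tf})\ge\min(1,\beta)$, is left unproven, and you misdescribe its structure. The block you call $\ast$ is zero. Use the column ordering $(x,u_{[0,\Tini-1]},u_{[\Tini,\Tini+\Tf-1]})$.
\begin{itemize}
\item The $\uf$ rows of $\Phi_{\Tini+\Tf}$ are exactly $[0~~0~~I_{m\Tf}]$.
\item The $\yp$ rows do not involve future inputs, by causality.
\item The Toeplitz coupling of future inputs appears only in the $\yf$ rows, which $M$ discards.
\end{itemize}
So the selected matrix is $[F~|~G]$. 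Here $F$ is $\Phi_{\Tini}$ with a zero row block inserted, and $G$ is $I_{m\Tf}$ in the $\uf$ rows and zero elsewhere. Hence $F^\top G=0$, and for $x=(x_1,x_2)$,
$\|\Pi\Phi_{\Tini+\Tf}x\|_2^2=\|\Phi_{\Tini}x_1\|_2^2+\|x_2\|_2^2\ge\min(\beta,1)^2\|x\|_2^2$
holds exactly. This is the argument of Lemma~\ref{lemma:sigma-min-M}. Your ``splitting'' idea works only because of this orthogonality. For a genuinely block-triangular matrix with $\ast\neq 0$, it cannot yield $\min(1,\beta)$.

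\textbf{The fallback fails.} The factor $2$ in the statement's first term is not slack reserved for a weaker bound on $\sigma_{\min}(M)$; it comes from the max form of Stewart's bound, as noted above. Suppose you only had $\sigma_{\min}(M)\ge c\ga$ with $c<1$.
\begin{itemize}
\item The second term would become $1/(c\ga)$.
\item More seriously, under the hypothesis $d\le\ga/(2\sqrt2)$, Weyl would only give $\sigma_{\min}(\hat M)\ge (c-\tfrac12)\ga$.
\item For $c\le\tfrac12$ this does not even guarantee that $\hat M$ has full column rank. For $c$ near $\tfrac12$ it makes $\|\hat M^\dag\|_2$ blow up.
\end{itemize}
So the exact constant $\min(1,\beta)$ is essential. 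Once you insert the observation $\ast=0$, your proof goes through, with a first-term constant better than the one stated.
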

\noindent
Theorem \ref{thm:error-bound} answers Problem \ref{problem} by quantifying how perturbations of restricted behavior, measured in behavioral distance, affect the prediction error. 
The perturbation in \eqref{eq:error-bound} is quantified by the chordal
distance between $\hat{\bmU}$ and $\bmU$, which is
independent of the data-matrix representations.  However, the
resulting Lipschitz constant is expressed through $\Phi_{T_{\ini}}$ and
$\Phi_{T_{\ini}+T_f}$, and therefore depends on the chosen state-space
realization and input-output coordinates. Hence, the bound uses a coordinate-free subspace distance, but its explicit constant is realization-dependent. Theorem~\ref{thm:one-step-pred-error-known} provides a more practical, data-dependent counterpart, where these constants are replaced by quantities computed from data matrices.  

The equal-dimensional assumption that $\dim \hat{\bmU} = \dim \bmU$ in Theorem 1 ensures that the chordal distance is well-defined. This setting is natural for subspace tracking methods with a fixed target dimension \cite{sasfi2026great}. 
 It also covers truncated \gls{SVD}-type preprocessing \cite{kaviani2025uncertainty} when the noisy data matrix is truncated to the prescribed rank. In contrast, if a rank-selection strategy returns a dimension different from the true dimension, then the present chordal-distance bound does not directly apply. 
 Extensions to different-dimensional behavioral subspaces \cite{padoan2025distances} can be handled using distances on the disjoint union of Grassmannians, such as the Grassmannian metric in \cite[Proposition 15]{ye2016schubert}; we leave this direction for future work.
The proof of Theorem \ref{thm:error-bound}, presented in Section \ref{sec:proof}, relies on three auxiliary results, developed in the following sections.
\vspace{-10pt}
\subsection{ Coordinate-free  subspace predictor}\label{subsec:rot-invar-subspace-pred}
We establish that the subspace predictor depends only on the restricted behavior, rather than on the particular matrix representation. 
\begin{proposition}\label{thm:sp-rotinvar}
    Let $\Tini, \Tf \in \Z_{>0}$. Let $X, Y \in \R^{(m+p)(\Tini+\Tf) \times r}, r \in \Z_{>0}$. Suppose $\im X = \im Y$. Assume $M = [X_\up^\top X_\uf^\top X_\yp^\top]^\top$ has full column rank. 
     Then for any $(u_\ini, u, y_\ini) \in \R^{m(\Tini+\Tf)+p\Tini}$,
    \[
    \calS(X, u_\ini, u, y_\ini)=
    \calS(Y, u_\ini, u, y_\ini).
    \]
\end{proposition}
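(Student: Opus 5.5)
The plan is to show that $Y$ is obtained from $X$ by right-multiplication with an invertible $r\times r$ matrix. Then the change of coordinates passes through the Moore--Penrose inverse and cancels in the formula \eqref{eq:subspace-predictor}.

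First, since $M=[X_\up^\top~X_\uf^\top~X_\yp^\top]^\top$ is a row-submatrix of $X$ (up to a row permutation) and has full column rank, $X$ also has full column rank $r$. Hence $\dim \im X = r$. Every column of $Y$ lies in $\im Y=\im X$, so there exists $Q\in\R^{r\times r}$ with $Y = XQ$. Because $X$ has full column rank, $Q$ is unique. Moreover $\operatorname{rank} Y = \dim \im Y = \dim\im X = r$, and $\operatorname{rank}Y\le \operatorname{rank}Q$, so $Q$ is invertible. This is the step I expect to need the most care, since it is where the full-column-rank assumption on $M$ is used to rule out $Y$ having redundant columns. Both $X$ and $Y$ have exactly $r$ columns, so the rank count forces $Q$ to be nonsingular.

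Second, the block partition is row-wise, so $Y_\yf = X_\yf Q$ and $N:=[Y_\up^\top~Y_\uf^\top~Y_\yp^\top]^\top = MQ$. Since $M$ has full column rank and $Q$ is invertible, $N$ has full column rank, and
\[
N^\dag = (N^\top N)^{-1}N^\top = \big(Q^\top M^\top M Q\big)^{-1}Q^\top M^\top = Q^{-1}(M^\top M)^{-1}M^\top = Q^{-1}M^\dag .
\]

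Finally, for any $b=(u_\ini,u,y_\ini)$,
\[
\calS(Y,u_\ini,u,y_\ini) = Y_\yf N^\dag b = X_\yf Q Q^{-1} M^\dag b = X_\yf M^\dag b = \calS(X,u_\ini,u,y_\ini),
\]
which is the claim. In particular, the predictor depends only on $\im X$, so the notation $y^\pred_{[0,\Tf-1]}(\bmU)$ used in Theorem \ref{thm:error-bound} is well defined for any full-column-rank basis of $\bmU$ whose $M$-block has full column rank.
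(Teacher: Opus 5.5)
Your proof is correct and rests on the same key fact as the paper's: for a full-column-rank $M$ and an invertible $r\times r$ matrix $R$, one has $(MR)^\dag = R^{-1}M^\dag$, so the change of basis cancels against $X_\yf$. The paper routes this through QR factorizations and an orthogonal matrix relating the two orthonormal bases. You instead relate $X$ and $Y$ directly by an invertible $Q$ and verify the pseudoinverse identity via the normal-equations formula, which is slightly more direct.
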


\begin{proof}
    Since $M$ has full column rank and $M$ is a submatrix of $X$, we have $\rank X = \rank Y = r$.
    Consider the QR decompositions \cite[Theorem 5.2.3]{golub2013matrix} $X=UR_X, Y=VR_Y$, where $U,V \in \R^{(m+p)(\Tini+\Tf) \times r}$ are orthonormal and $R_X, R_Y \in \R^{r \times r}$ are invertible. Since $\im X = \im Y$, we have $\im U = \im V$, hence $V=UQ$ for some $Q \in \rmO(r)$. 
    Denote $M_U = [U_\up^\top U_\uf^\top U_\yp^\top]^\top$, 
    then $M = M_U R_X$, which implies $M_U$ also has full column rank. Then $(M_U R_X)^\dag = R_X^{-1} M_U^\dag$. Therefore,
    $
    y_{[0,\Tf-1]}^\pred(X) 
    = (U_\yf R_X)(M_U R_X)^\dag b 
    = U_\yf R_X (R_X^{-1} M_U^\dag) b 
    = U_\yf M_U^\dag b.
    $
    Similarly, we have
    $y_{[0, \Tf-1]}^\pred(Y)    
    = U_\yf QR_Y (M_U QR_Y)^\dag b 
    = U_\yf QR_Y R_Y^{-1}(M_U Q)^\dag b
    = U_\yf QQ^\top M_U^\dag b = U_\yf M_U^\dag b. 
    $
    Thus $y_{[0, \Tf-1]}^\pred(X) = y_{[0, \Tf-1]}^\pred(Y)$.
\end{proof}
\vspace{2pt}
\noindent
The full-column-rank assumption is discussed quantitatively in Lemma \ref{lemma:sigma-min-M}, where it is related to $\Tini$ and the observability properties of the restricted behavior.
 Proposition \ref{thm:sp-rotinvar}  implies that $\calS$ can be viewed as a well-defined mapping on the Grassmannian, i.e., 
    \begin{align*}
    \calS: \Gr((m+p)(\Tini+\Tf), r) &\times \R^{m(\Tini+\Tf)+p\Tini} \to \R^{p\Tf}, \\
    \calS(\bmU, u_\ini, u, y_\ini) &=  U_\yf 
    \begin{bmatrix}
    U_\up  \\
    U_\uf  \\
    U_\yp 
    \end{bmatrix}^\dag 
    \begin{bmatrix}
        u_\ini \\ u \\ y_\ini
    \end{bmatrix},
    \end{align*}
    where $U$ is an orthonormal basis of $\bmU$.
When the vector $(u_\ini, u, y_\ini)$ is clear from context, the notations 
\[
\calS(\bmU), \quad 
\calS(U), \quad y_{[0, \Tf-1]}^\pred(U)
\]
will be used interchangeably. 
Now we can replace $H$ and $\hat{H}$ in \eqref{eq:first-error-bound} with their orthonormal bases $U$ and $\hat{U}$, respectively. Then \eqref{eq:first-error-bound} reveals two sources of prediction error: one through the sensitivity of $M^\dag$, governed by the singular values of $M$, and the other is related to the matrix norm $\|\hat{U} - U\|_F$ which depends on the basis representations of the restricted behaviors $\im \hat{H}$ and $\im H$. 
We next study the quantitative observability of \eqref{eq:LTI-1}, which is related to the singular values of $M$. 
\subsection{On the quantitative observability condition}
\label{sec:sigma_min-M}
We now turn to the assumption on $\sigma_{\min}(\Phi_{\Tini})$ in Theorem \ref{thm:error-bound}. The following proposition shows that this assumption is equivalent to observability of system \eqref{eq:LTI-1}.
\begin{proposition}
Let $\Tini \geq n$. Then there exists $\beta>0$ such that $\sigma_{\min}(\Phi_{\Tini})\geq \beta$ if and only if
$\rank(\mathcal O_{\Tini})=n.$
\end{proposition}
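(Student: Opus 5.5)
The key observation is that $\Phi_{\Tini}$ is a square matrix. Since
\[
\Phi_{\Tini} = \begin{bmatrix} 0 & I_{m\Tini} \\ \mathcal{O}_{\Tini} & \mathcal{T}_{\Tini}\end{bmatrix}
\]
has $m\Tini + p\Tini$ rows and $n + m\Tini$ columns, it is not square in general. The statement "$\sigma_{\min}(\Phi_{\Tini}) \geq \beta > 0$" should therefore be read as: the smallest of the $n+m\Tini$ singular values is positive. For a tall matrix ($p\Tini \geq n$, which holds once $\Tini \geq n$ and $p \geq 1$), this is equivalent to $\Phi_{\Tini}$ having full column rank. So I would reduce the proposition to showing that $\Phi_{\Tini}$ has full column rank if and only if $\rank(\mathcal{O}_{\Tini}) = n$. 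The existence of $\beta$ is then immediate: take $\beta = \sigma_{\min}(\Phi_{\Tini})$ when it is positive. Conversely, if the column rank is deficient, the minimal singular value is $0$ and no positive $\beta$ exists.

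For the rank equivalence, I would examine the kernel directly. Suppose $\Phi_{\Tini}\begin{bmatrix} x \\ v\end{bmatrix} = 0$ with $x \in \R^n$ and $v \in \R^{m\Tini}$.
\begin{itemize}
\item The first block row gives $v = 0$.
\item The second block row then reduces to $\mathcal{O}_{\Tini} x = 0$.
\end{itemize}
Hence $\ker \Phi_{\Tini} = \ker \mathcal{O}_{\Tini} \times \{0\}$, and $\Phi_{\Tini}$ is injective exactly when $\mathcal{O}_{\Tini}$ is injective, i.e.\ when $\rank(\mathcal{O}_{\Tini}) = n$.

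An alternative view of the same fact uses the block lower-triangular structure after a row permutation: column rank equals $m\Tini + \rank \mathcal{O}_{\Tini}$. I would also note that the hypothesis $\Tini \geq n$ ties in with Cayley--Hamilton, so $\rank \mathcal{O}_{\Tini} = n$ is exactly observability of \eqref{eq:LTI-1}. This is used only for the interpretation, not for the proof.

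The argument has no real obstacle. The only point needing care is interpreting $\sigma_{\min}$ for a rectangular matrix: it must be the $(n+m\Tini)$-th singular value, so that $\sigma_{\min} > 0$ is equivalent to $\sigma_{\min}(\Phi_{\Tini})^2 = \lambda_{\min}(\Phi_{\Tini}^\top \Phi_{\Tini}) > 0$, i.e.\ to injectivity. I would state this convention explicitly at the start.
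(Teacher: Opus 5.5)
Your proof is correct and follows essentially the paper's argument. Both reduce $\sigma_{\min}(\Phi_{\Tini})>0$ to full column rank of $\Phi_{\Tini}$ and use the identity block $I_{m\Tini}$ to isolate the condition $\operatorname{rank}(\mathcal{O}_{\Tini})=n$; your identity $\ker\Phi_{\Tini}=\ker\mathcal{O}_{\Tini}\times\{0\}$ is a cleaner version of the paper's column-independence remark that covers both directions at once. Delete your opening sentence claiming $\Phi_{\Tini}$ is square: it is false in general, and you contradict it in the very next line.
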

\begin{proof}
Positivity of $\sigma_{\min}(\Phi_{\Tini})$ implies that $\Phi_{\Tini}$ has full column rank $n+m\Tini$. The upper right block $I_{m\Tini}$ has rank $m\Tini$, so the remaining $n$ columns must be linearly independent, which is equivalent to $\rank(\mathcal O_{\Tini})=n$. Conversely, if $\rank (\calO_{\Tini}) = n$, then the structure of $\Phi_{\Tini}$ implies that $\Phi_{\Tini}$ has full column rank, and hence $\sigma_{\min}(\Phi_{\Tini}) > 0$.
\end{proof}
\noindent
This viewpoint also aligns with the quantitative observability condition \cite[Eq. (20)]{coulson2022quantitative} introduced in the robust fundamental lemma, which assumes a lower bound on $\sigma_{\min}(\Phi_{\Tini+\Tf})$. The parameter $\beta$ quantifies the \emph{degree} of observability.
\begin{lemma}\label{lemma:sigma-min-M}
Consider system \eqref{eq:LTI-1}. 
Let $T_{\mathrm{ini}}\ge n$, $T_f \in \Z_{>0}$. Let $V \in \R^{(m+p)(\Tini+\Tf) \times r}$ be an orthonormal basis of $\im \Phi_{\Tini+\Tf}$. 
Denote $M = [V_\up^\top~V_\uf^\top~V_\yp^\top]^\top$.
Assume that $\sigma_{\min}(\Phi_{\Tini}) \geq \beta$ for some $\beta>0$ and $\sigma_{\max}(\Phi_{\Tini+\Tf}) = \alpha > 0$.
Then $M$ satisfies
\[
\sigma_{\min}(M) \;\ge\; \min\{1,\beta\} / \alpha.
\]
The bound is also  coordinate-free  in the sense that $\sigma_{\min}(MP) \;\ge\; \min\{1,\beta\} / \alpha$ for all $P \in \rmO(r)$. 
\end{lemma}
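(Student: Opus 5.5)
We relate $V$ to $\Phi_{\Tini+\Tf}$ directly. By \cite[Corollary 19]{markovsky2022identifiability}, $\im \Phi_{\Tini+\Tf} = \calB_{[0,\Tini+\Tf-1]}$. Since $\Tini \ge n$ and $\sigma_{\min}(\Phi_{\Tini}) \ge \beta > 0$, the matrix $\Phi_{\Tini}$ has full column rank, so $\Phi_{\Tini+\Tf}$ does too. Hence
\[
r = n + m(\Tini+\Tf), \qquad V = \Phi_{\Tini+\Tf} R
\]
for a unique invertible $R \in \R^{r\times r}$. From $V^\top V = I$ we get $R^\top \Phi_{\Tini+\Tf}^\top \Phi_{\Tini+\Tf} R = I$, so the singular values of $R$ are the reciprocals of those of $\Phi_{\Tini+\Tf}$. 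In particular $\sigma_{\min}(R) = 1/\alpha$.

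Next we extract the rows of $M$. Write $\Phi_{\Tini+\Tf}$ acting on $(x_0, u_{\mathrm{p}}, u_{\mathrm{f}})$, with $u_{\mathrm{p}}$ of length $m\Tini$ and $u_{\mathrm{f}}$ of length $m\Tf$. Let $N$ be the submatrix of $\Phi_{\Tini+\Tf}$ formed by its $\up$, $\uf$ and $\yp$ block rows. The $\up$ rows give $[0\; I_{m\Tini}\; 0]$ and the $\uf$ rows give $[0\; 0\; I_{m\Tf}]$. Because $\calT_{\Tini+\Tf}$ is block lower triangular, the first $\Tini$ output blocks give $[\calO_{\Tini}\; \calT_{\Tini}\; 0]$. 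After permuting rows,
\[
N \simeq \begin{bmatrix} \Phi_{\Tini} & 0 \\ 0 & I_{m\Tf}\end{bmatrix}.
\]
Therefore $\sigma_{\min}(N) = \min\{\sigma_{\min}(\Phi_{\Tini}), 1\} \ge \min\{1,\beta\}$. Since $M = N R$ and both factors are square or tall with full column rank, the standard inequality $\sigma_{\min}(NR) \ge \sigma_{\min}(N)\sigma_{\min}(R)$ (from $\|NRx\| \ge \sigma_{\min}(N)\|Rx\| \ge \sigma_{\min}(N)\sigma_{\min}(R)\|x\|$) gives
\[
\sigma_{\min}(M) \ge \frac{\min\{1,\beta\}}{\alpha}.
\]

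The coordinate-free claim then follows immediately. For $P \in \rmO(r)$, $MP$ has the same singular values as $M$. Equivalently, $VP$ is another orthonormal basis, which corresponds to $R$ being replaced by $RP$, and $\sigma_{\min}(RP) = \sigma_{\min}(R)$.

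The main obstacle is the bookkeeping in the second step. One must check that the $\yp$ rows of $\Phi_{\Tini+\Tf}$ have a zero block in the $u_{\mathrm{f}}$ columns, which is causality of the Toeplitz structure, and that the $x_0$ and $u_{\mathrm{p}}$ columns coincide exactly with $\Phi_{\Tini}$. This is what makes the block-diagonal form, and hence the $\min\{1,\beta\}$ constant, exact.
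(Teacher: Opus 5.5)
Your proof is correct and follows essentially the same route as the paper. Both arguments write $M$ as the $(\up,\uf,\yp)$ row-submatrix of $\Phi_{\Tini+\Tf}$ times an invertible change-of-basis matrix with smallest singular value $1/\alpha$; by causality that submatrix is $\Phi_{\Tini}\oplus I_{m\Tf}$ up to row permutation, so its $\sigma_{\min}$ is at least $\min\{1,\beta\}$, and both finish with $\sigma_{\min}(XY)\ge\sigma_{\min}(X)\sigma_{\min}(Y)$. The differences are cosmetic: you write $V=\Phi_{\Tini+\Tf}R$ directly and read off $\sigma(R)=1/\sigma(\Phi_{\Tini+\Tf})$ from $V^\top V=I$ instead of using a QR factorization, and you permute rows to a block-diagonal form where the paper uses orthogonality of the column spaces of $F$ and $G$.
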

\begin{proof}
Let $U$ be the orthonormal matrix from the QR factorization $\Phi_{\Tini+\Tf} = UR$, where $R$ is invertible.
Then there exists $Q \in \rmO(r)$ such that $U = VQ$. 
Let $\Pi$ denote the operator that selects the first $m\Tini+m\Tf+p\Tini$ rows of a matrix such that $M = \Pi V = \Pi UQ$.
Then 
$
\Pi \Phi_{\Tini+\Tf} = (\Pi VQ) R = M QR.
$
The matrix $\Pi \Phi_{\Tini+\Tf}$ admits the form
\[
\Pi \Phi_{\Tini+\Tf} =
\begin{bmatrix}
0 & I_{mT_{\mathrm{ini}}} & 0\\
0 & 0 & I_{mT_f}\\
\mathcal O_{T_{\mathrm{ini}}} & \mathcal T_{T_{\mathrm{ini}}} & 0
\end{bmatrix} \coloneqq \begin{bmatrix}
    F ~|~ G
\end{bmatrix}
\]
where $F=\begin{bmatrix}
    0 & I_{m\Tini} \\
    0 & 0 \\
    \calO_{\Tini} & \calT_{\Tini}
\end{bmatrix}$ and $G=\begin{bmatrix}
    0 \\ I_{m\Tf} \\ 0
\end{bmatrix}$. Since $F^\top G = 0$, the column spaces of $F$ and $G$ are orthogonal. Therefore for any vector $x=(x_1, x_2)$ where $x_1 \in \R^{m\Tini+p\Tini}, x_2 \in \R^{m\Tf}$, 
\begin{align*}
&\| \Pi \Phi_{\Tini+\Tf} x\|_2^2
= \| F x_1 \|_2^2 + \| G x_2 \|_2^2 \\
\geq &\sigma_{\min}(F)^2\|x_1\|_2^2 + \sigma_{\min}(G)^2\|x_2\|_2^2 \\
\geq &\min(\sigma_{\min}(F)^2, \sigma_{\min}(G)^2) (\|x_1\|_2^2 + \|x_2\|_2^2) \\
= &\min(\sigma_{\min}(F)^2, \sigma_{\min}(G)^2) \|x\|_2^2.
\end{align*}
Now $\sigma_{\min}(G) = 1$, and $\sigma_{\min}(F) = \sigma_{\min}(\Phi_{\Tini}) \geq \beta$.
Therefore, $\sigma_{\min}(\Pi \Phi_{\Tini+\Tf}) \geq \min \{\beta, 1\}.$
We have the inequality 
$
\|XYx\|_2 \geq \sigma_{\min}(X)\|Yx\|_2
\geq \sigma_{\min}(X)\sigma_{\min}(Y)\|x\|_2
$ for all $x \in \R^r$, where $X=\Pi \Phi_{\Tini+\Tf}, Y=(QR)^{-1}$.
Hence 
$\sigma_{\min}(XY) \geq \sigma_{\min}(X) \sigma_{\min}(Y)$.
Since $\Phi_{\Tini+\Tf}$ has full column rank and $R$ is invertible, 
\begin{align*}
&\sigma_{\min}(M)
= \sigma_{\min}(XY) \geq \sigma_{\min}(X)\sigma_{\min}((QR)^{-1}) \\
&= \sigma_{\min}(\Pi \Phi_{\Tini+\Tf}) / \sigma_{\max}(QR) \\
&= \sigma_{\min}(\Pi \Phi_{\Tini+\Tf}) / \sigma_{\max}(R)
\geq \min\{1,\beta\} / \alpha.
\end{align*}
Finally, since $\sigma(MP)=\sigma(M)$ for all $P \in \rmO(r)$, we have $\sigma_{\min}(MP) = \sigma_{\min}(M)$. 
\end{proof}
\noindent
Denote $\ga = \min(1, \beta) / \alpha$, then we see that the bound in Theorem \ref{thm:error-bound} improves as $\beta$ increases, and deteriorates as $\alpha$ increases. In this sense, a system with stronger quantitative observability leads to a tighter bound, while a larger gain $\alpha$ leads to greater sensitivity to behavioral perturbations.

\subsection{Basis alignment and chordal distance}
The term $\|\hat{U} - U\|_F$ in \eqref{eq:first-error-bound} depends on the particular choice of basis of $\hat{\bmU}$ and $\bmU$, whereas the prediction error $\norm{y_{[0,\Tf-1]}^\pred(\hat{U}) - y_{[0,\Tf-1]}^\pred (U)}_2$ remains invariant by  Proposition \ref{thm:sp-rotinvar} . 
It is therefore natural to choose orthonormal bases that minimize $\|\hat{U} - U\|_F$. The following lemma is classical and closely related to the orthogonal Procrustes problem \cite{schonemann1966generalized}. We include it here for completeness, since the proof highlights the geometric role of principal angles and aligns with the subspace-based viewpoint adopted in this work.
\begin{lemma}\label{lemma:fro-chordal}
    Let $\bmU, \hat{\bmU} \in \Gr((m+p)L, r)$ with orthonormal bases $U$ and $\hat{U}$, respectively. Let $\theta_1, \dots, \theta_r$ be the principal angles between $\bmU$ and $\hat{\bmU}$.  
    Then 
    \(
    \min_{R \in \rmO(r)} \norm{U - \hat{U}R}_F^2 = 
    2r - 2\sum_{i=1}^r \cos \theta_i.
    \)
    The minimum is attained at $R^* = QP^\top$, where 
    $U^\top \hat{U} = P \cos \Theta Q^\top$ is the compact \gls{SVD} with $\cos \Theta \coloneqq \diag(\cos \theta_1, \dots, \cos \theta_r)$. In particular, 
    \(
    \norm{U - \hat{U}R^*}_F \leq \sqrt{2}d(\bmU, \hat{\bmU}).
    \)
\end{lemma}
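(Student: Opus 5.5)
The plan is to reduce the minimization over $\rmO(r)$ to a linear trace maximization and then solve it with the singular value decomposition of $U^\top\hat U$. Since $U$ and $\hat U R$ both have orthonormal columns for every $R\in\rmO(r)$, expanding the square gives
\[
\norm{U-\hat U R}_F^2 = \operatorname{tr}(U^\top U) + \operatorname{tr}(R^\top \hat U^\top \hat U R) - 2\operatorname{tr}(U^\top \hat U R) = 2r - 2\operatorname{tr}(U^\top \hat U R).
\]
Minimizing the left-hand side is therefore equivalent to maximizing $\operatorname{tr}(U^\top\hat U R)$ over $R\in\rmO(r)$.

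Next I substitute the compact SVD $U^\top\hat U = P\cos\Theta\, Q^\top$, whose singular values are the cosines of the principal angles by the definition in the Notation paragraph. Because $P,Q\in\rmO(r)$ are square, cyclicity of the trace gives
\[
\operatorname{tr}(U^\top\hat U R)=\operatorname{tr}(\cos\Theta\, W), \qquad W\coloneqq Q^\top R P\in\rmO(r).
\]
Every entry of an orthogonal matrix satisfies $|W_{ii}|\le 1$, and $\cos\theta_i\ge 0$. Hence
\[
\operatorname{tr}(\cos\Theta\, W)=\sum_{i=1}^r \cos\theta_i\, W_{ii}\le \sum_{i=1}^r\cos\theta_i .
\]
Equality holds when $W=I$, that is, when $R=QP^\top = R^*$. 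This proves the formula for the minimum and identifies the minimizer.

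For the final inequality, with $\|U-\hat U R^*\|_F^2 = 2\sum_{i=1}^r(1-\cos\theta_i)$, I use the elementary bound $1-\cos\theta\le 1-\cos^2\theta=\sin^2\theta$, valid for $\cos\theta\in[0,1]$. This gives $\|U-\hat U R^*\|_F^2\le 2\sum_{i=1}^r\sin^2\theta_i = 2\,d(\bmU,\hat\bmU)^2$ by \eqref{eq:chord}. Taking square roots yields the claim.

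The only step needing care is the trace maximization, specifically the bound $|W_{ii}|\le1$ together with the nonnegativity of the singular values. It is a von Neumann-type argument, but here it is elementary, so I expect no real obstacle.
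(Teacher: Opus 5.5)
Your proposal is correct and follows essentially the same route as the paper: the paper also expands $\norm{U-\hat U R}_F^2 = 2r - 2\operatorname{Tr}(U^\top\hat U R)$, sets $S = Q^\top R P$, bounds the trace by $\sum_i \cos\theta_i$ using $|S_{ii}|\le 1$ with equality at $R = QP^\top$, and then uses $\sin^2\theta_i - (1-\cos\theta_i) = \cos\theta_i(1-\cos\theta_i)\ge 0$. Your version is slightly more explicit, since it spells out the trace cyclicity and the role of $\cos\theta_i \ge 0$.
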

\begin{proof}
Let $R \in \rmO(r)$, we have $\|U - \hat{U}R\|_F^2 
= 2r - 2\operatorname{Tr} (U^\top \hat{U} R)$.
Denoting $S=Q^\top RP$, we have
\(\Tr(U^\top \hat{U} R)
= \sum_{i=1}^r S_{ii} \cos \theta_i. 
\)
Since $S = Q^\top RP \in \rmO(r)$, we have $|S_{ii}| \leq 1$ for $i=1,\dots, r$. Therefore, $\operatorname{Tr}(U^\top \hat{U} R)$ is maximized when $R = QP^\top$, in which case $S=I_r$. Then 
\[
\resizebox{\columnwidth}{!}{$\displaystyle 
    \norm{U - \hat{U}QP^\top}_F^2
    = 2r - 2\sum_{i=1}^r \cos \theta_i
    \leq 2\sum_{i=1}^r \sin^2 \theta_i \\
    = 2d(\hat{\bmU}, \bmU)^2,
    $}
\]
where the last inequality follows from $
\sin^2 \theta_i - (1 - \cos \theta_i) = \cos \theta_i (1 - \cos \theta_i) \geq 0$.
\end{proof}
\subsection{Proof of Theorem \ref{thm:error-bound}}\label{sec:proof}
By Lemma \ref{lemma:fro-chordal},
once an orthonormal basis $U$ of $\bmU$ is fixed, we may choose an orthonormal basis $\hat{U}$ of $\hat{\bmU}$ so that $\|\hat{U}-U\|_F$ is minimized. In the following, $\hat{U}$ denotes such a minimizing basis constructed in Lemma \ref{lemma:fro-chordal}.    Denote $\kappa = d(\hat{\bmU}, \bmU)$, we have
    $\|\hat{U}_\yf - U_\yf\|_F \leq \|\hat{U} - U\|_F \leq \sqrt{2}\kappa$. The difference between $\hat{M}^\dag$ and $M^\dag$ can be bounded using results on perturbation of pseudo-inverse \cite[Theorem 3.3]{stewart1977perturbation}, which implies
    \begin{equation*}
    \resizebox{\columnwidth}{!}{$\displaystyle
    \norm{\hat{M}^\dag - M^\dag}_2  
    \leq \frac{1+\sqrt{5}}{2} 
    \max\left\{\norm{\hat{M}^\dag}_2^2, 
          \norm{{M}^\dag}_2^2 \right\} \norm{\hat{M} - M}_2.
    $}
    \end{equation*}
    By Weyl's inequality \cite[Eq. (3.3.19)]{roger1994topics} for singular values and Lemma \ref{lemma:fro-chordal},
    \[
    \resizebox{\columnwidth}{!}{$\displaystyle 
    |\sigma_{\min}(\hat{M}) - \sigma_{\min}(M)| \leq
    \norm{\hat{M} - M}_2 
    \leq \norm{\hat{M} - M}_F \leq \sqrt{2}\kappa,    
    $}
    \]
    hence $\sigma_{\min}(\hat{M}) \geq \sigma_{\min}({M}) - \sqrt{2}\kappa \geq \ga - \sqrt{2}\kappa$ 
    by Lemma \ref{lemma:sigma-min-M}.
    Since $\kappa \leq \ga / {2\sqrt{2}}$, 
    then $\sigma_{\min}(\hat{M}) \geq \ga / {2}$, 
    so $\|\hat{M}^\dag\|_2 = \sigma_{\min}(\hat{M})^{-1} \leq {2} / \ga$. We then have 
    \[
    \max\left\{\norm{\hat{M}^\dag}_2^2, 
          \norm{{M}^\dag}_2^2 \right\} 
    \leq \max\left\{
    \frac{4}{\ga^2}, 
    \frac{1}{\ga^2} 
    \right\} \\
    = \frac{4}{\ga^2},     
    \]
Plugging the above inequalities into \eqref{eq:first-error-bound}, using Lemma \ref{lemma:fro-chordal}, and using the fact that $\|\hat{U}_\yf\|_2 \leq \|\hat{U}\|_2 = 1$, we have 
\begin{align*}
&\norm{\calS(\hat{\bmU}) - \calS(\bmU)}_2
= \norm{\calS(\hat{U}) - \calS(U)}_2 \notag\\
&\leq
\left(
\frac{2(1+\sqrt{5})\|\hat{U}_\yf\|_2}{\ga^2}
+
\frac{1}{\ga}
\right)
\norm{\hat{U}-U}_F\|b\|_2 \\
&\leq
\left(
\frac{2(1+\sqrt{5})}{\ga^2}
+
\frac{1}{\ga}
\right)
\sqrt{2}d(\hat{\bmU},\bmU)\|b\|_2 . 
\end{align*}

    The proof uses classical pseudoinverse perturbation tools, but Theorem 1 differs from classical subspace-angle perturbation bounds \cite{fierro1996perturbation}: it bounds the prediction error of the block-structured subspace predictor under chordal-distance perturbations, with constants containing system-theoretic quantities. Thus, the theorem is a Lipschitz-continuity result for the subspace predictor.

\subsection{ Data-dependent prediction error bound }

In practice, the prediction error bound \eqref{eq:error-bound} is not directly computable because the true behavior is unknown. In this section, we derive a data-dependent error bound, with the only unknown term being the chordal distance between the perturbed and the true subspaces. 
In some settings, however, this chordal distance can itself be bounded. For noisy data processed by truncated \gls{SVD}, \cite[Theorem 9]{alsalti2024robust} provides a computable bound for the chordal distance between the estimated and true subspaces. For recursive subspace identification, subspace tracking guarantees such as \cite[Theorem 1]{sasfi2026great} provide an online bound on the chordal distance of the estimated and true subspaces. 
Thus, Theorem~\ref{thm:one-step-pred-error-known} can be composed with these results to produce computable certificates.

\begin{theorem}\label{thm:one-step-pred-error-known}
Consider system \eqref{eq:LTI-1}. Let $\Tini, \Tf \in \Z_{>0}$ and let $\Tini \geq n$. 
Let $(u_\ini, y_\ini) \in \calB_{[0,\Tini-1]}$, $u \in \R^{m\Tf}$.
Let 
$\calB_{[0,\Tini+\Tf-1]} = \bmU$ and $\hat{\bmU} \in \Gr((m+p)(\Tini+\Tf), \dim \bmU)$. 
Let $\hat{U}$ be an orthonormal basis of $\hat{\bmU}$. Let 
$\hat{M} = [\hat{U}_{\up}^\top ~ \hat{U}_\uf^\top ~ \hat{U}_\yp^\top]^\top$.
Let $\hat{U}^{(1)}_\yf \coloneqq [I_p ~ 0 \cdots 0]\hat{U}_\yf$. 
If $d(\hat{\bmU}, \bmU) \leq \sigma_{\min}(\hat{M})/(2\sqrt{2})$ and $\sigma_{\min}(\hat{M})>0$, we have 
\begin{equation}\label{eq:error-bound-one-step-known}
    \begin{aligned}
    &\norm{y_0^\pred(\hat{\bmU}) - y_0^\pred(\bmU)}_2 \leq \\
    &\left( 
    \frac{2(1+\sqrt{5})\|\hat{U}^{(1)}_\yf\|_2}{\sigma_{\min}(\hat{M})^2} + 
    \frac{1}{\sigma_{\min}(\hat{M})}
    \right)
    \sqrt{2}d(\hat{\bmU}, \bmU)
    \|b\|_2.
    \end{aligned}
    \end{equation}
\end{theorem}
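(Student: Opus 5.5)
The plan is to repeat the argument of Theorem \ref{thm:error-bound}, but with the roles of $\bmU$ and $\hat{\bmU}$ reversed. All constants should come from the computable side, and only the first block row of the prediction is kept. By Proposition \ref{thm:sp-rotinvar}, both predictions are independent of the chosen orthonormal bases. I will therefore keep the given orthonormal basis $\hat{U}$ of $\hat{\bmU}$ fixed and use Lemma \ref{lemma:fro-chordal} (with the roles of the two subspaces swapped) to choose an orthonormal basis $U$ of $\bmU$ with $\|U-\hat{U}\|_F\le\sqrt{2}\kappa$, where $\kappa=d(\hat{\bmU},\bmU)$. Let $M=[U_\up^\top~U_\uf^\top~U_\yp^\top]^\top$ and $U^{(1)}_\yf=[I_p~0\cdots0]U_\yf$. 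Since the first output block is a row selection of the predictor output, $y_0^\pred(U)=U^{(1)}_\yf M^\dag b$, and similarly for the hatted quantities.

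The first step is to transfer the conditioning to the true side. Submatrices are norm-dominated, so Weyl's inequality gives $|\sigma_{\min}(M)-\sigma_{\min}(\hat{M})|\le\|M-\hat{M}\|_2\le\sqrt{2}\kappa$. Combined with the hypothesis $\kappa\le\sigma_{\min}(\hat{M})/(2\sqrt{2})$, this yields $\sigma_{\min}(M)\ge\sigma_{\min}(\hat{M})/2>0$. In particular $M$ has full column rank, so Proposition \ref{thm:sp-rotinvar} applies to $\bmU$. Unlike in Theorem \ref{thm:error-bound}, we never need Lemma \ref{lemma:sigma-min-M}, because $\sigma_{\min}(\hat{M})$ takes the role of $\ga$. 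It follows that
\[
\max\{\|\hat{M}^\dag\|_2^2,\|M^\dag\|_2^2\}\le 4/\sigma_{\min}(\hat{M})^2 .
\]
Stewart's pseudoinverse perturbation bound \cite[Theorem 3.3]{stewart1977perturbation} then gives
\[
\|\hat{M}^\dag-M^\dag\|_2\le 2(1+\sqrt{5})\sqrt{2}\,\kappa/\sigma_{\min}(\hat{M})^2 .
\]

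Next I will split the one-step error into two terms. The split is chosen so that the factor multiplying the pseudoinverse difference is the computable block:
\[
y_0^\pred(\hat{U})-y_0^\pred(U)=\hat{U}^{(1)}_\yf(\hat{M}^\dag-M^\dag)b+(\hat{U}^{(1)}_\yf-U^{(1)}_\yf)M^\dag b .
\]
The first term is bounded by $\|\hat{U}^{(1)}_\yf\|_2\cdot 2(1+\sqrt{5})\sqrt{2}\kappa\|b\|_2/\sigma_{\min}(\hat{M})^2$, which is exactly the first summand in \eqref{eq:error-bound-one-step-known}. The second term is bounded by $\sqrt{2}\kappa\,\|M^\dag\|_2\|b\|_2$.

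I expect the main obstacle to be the second constant. The crude estimate $\|M^\dag\|_2\le 2/\sigma_{\min}(\hat{M})$ loses a factor $2$ relative to the claimed $1/\sigma_{\min}(\hat{M})$. The alternative split $U^{(1)}_\yf(\hat{M}^\dag-M^\dag)b+(\hat{U}^{(1)}_\yf-U^{(1)}_\yf)\hat{M}^\dag b$ produces the desired $1/\sigma_{\min}(\hat{M})$, but it puts the unknown $\|U^{(1)}_\yf\|_2$ in front of the pseudoinverse term. My first attempt will exploit that $b$ is an exact trajectory, $(u_\ini,y_\ini)\in\calB_{[0,\Tini-1]}$, so that $b\in\im M$. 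I will then try to bound $\|M^\dag b\|_2$ more sharply, or, failing that, control $\|U^{(1)}_\yf\|_2$ by $\|\hat{U}^{(1)}_\yf\|_2$ up to a term of order $\kappa$. If neither works, I will settle for the slightly weaker constant $2/\sigma_{\min}(\hat{M})$, or the constant $\|\hat{U}^{(1)}_\yf\|_2+\sqrt{2}\kappa$, in the corresponding term, and note the discrepancy with \eqref{eq:error-bound-one-step-known}.
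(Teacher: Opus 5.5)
Your proposal does not yet prove the statement. With your split and Stewart's bound, the second summand comes out as $2/\sigma_{\min}(\hat M)$ instead of $1/\sigma_{\min}(\hat M)$. Both fallbacks you list are weaker statements, and the two ``first attempts'' are announced but not carried out. Your diagnosis is accurate, though, and it applies to the paper too. The paper's proof uses exactly your alternative split,
$\|U^{(1)}_\yf\|_2\|M^\dag-\hat M^\dag\|_2+\|\hat U^{(1)}_\yf-U^{(1)}_\yf\|_F\|\hat M^\dag\|_2$.
It proves $\max\{\|M^\dag\|_2^2,\|\hat M^\dag\|_2^2\}\le 4/\sigma_{\min}(\hat M)^2$ just as you do, and then says the rest ``proceeds similarly''. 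That step silently replaces the unknown $\|U^{(1)}_\yf\|_2$ by $\|\hat U^{(1)}_\yf\|_2$, which nothing in the argument justifies. So following the paper would not close your gap.

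What is missing is to use $b\in\im M$ as an exact identity, and to couple the two error blocks instead of bounding each by $\sqrt{2}\kappa$. Let $s=\sigma_{\min}(\hat M)$, and let $U$ be the basis of $\bmU$ aligned to $\hat U$ by Lemma~\ref{lemma:fro-chordal}. Set $\Delta=\hat U-U$, $\Delta_M=\hat M-M$, $\Delta_1=\hat U^{(1)}_\yf-U^{(1)}_\yf$ and $\epsilon=\|\Delta\|_F$. From $b=MM^\dag b$ and $\hat M^\dag\hat M=I$ you get $\hat M^\dag b=M^\dag b-\hat M^\dag\Delta_M M^\dag b$. Your own split then becomes exact, and Stewart's theorem is no longer needed:
\[
y_0^\pred(\hat U)-y_0^\pred(U)=\bigl(\Delta_1-\hat U^{(1)}_\yf\hat M^\dag\Delta_M\bigr)M^\dag b .
\]
Put $x=\|\Delta_M\|_2$ and $y=\|\Delta_1\|_2$. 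By Weyl, the error is at most $\bigl(y/(s-x)+\|\hat U^{(1)}_\yf\|_2\,x/(s(s-x))\bigr)\|b\|_2$. Because $\Delta_M$ and $\Delta_1$ are disjoint row blocks of $\Delta$, you have $x^2+y^2\le\epsilon^2$; bounding $x$ and $y$ separately is what costs you the factor $2$.

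Under this constraint, $y/(s-x)\le\epsilon/\sqrt{s^2-\epsilon^2}$. Also $x/(s(s-x))\le 2\epsilon/s^2$, since $\epsilon\le\sqrt{2}\kappa\le s/2$.

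For the first of these you need the exact Procrustes value $\epsilon^2=\sum_i 2\sin^2\theta_i/(1+\cos\theta_i)$, not only $\epsilon\le\sqrt{2}\kappa$; the latter would give $2/\sqrt{3}$ instead of $1$ in this argument. Since $\hat M$ is a row block of $\hat U$, we have $s\le 1$. The hypothesis then forces $\sin^2\theta_i\le\kappa^2\le 1/8$, hence $\cos\theta_i\ge 1/4$. This gives $\epsilon^2\le\tfrac45\cdot 2\kappa^2\le 2\kappa^2 s^2/(s^2+2\kappa^2)$, where the last step is exactly $\kappa\le s/(2\sqrt{2})$. That inequality is equivalent to $\epsilon/\sqrt{s^2-\epsilon^2}\le\sqrt{2}\kappa/s$.

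Altogether you obtain \eqref{eq:error-bound-one-step-known}, even with $2$ in place of $2(1+\sqrt{5})$.
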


\begin{proof}
Let $U$ be an orthonormal basis of $\bmU$ and let $U_\yf^{(1)}$ be defined similarly as $\hat{U}_\yf^{(1)}$.
    Similar to $\eqref{eq:first-error-bound}$, we also have 
    \begin{align*}
    &\norm{y_0^\pred(\hat{\bmU}) - y_0^\pred(\bmU)}_2 \leq \\ 
    &\left(
    \norm{U^{(1)}_{\yf}}_2 \norm{M^\dag - \hat{M}^\dag}_2
    + \norm{\hat{U}^{(1)}_{\yf} - U^{(1)}_{\yf}}_F 
    \norm{\hat{M}^\dag}_2
    \right) \|b\|_2.
    \end{align*}
    Let $\kappa = d(\hat{\bmU}, \bmU)$. 
    By Weyl's inequality used in Section \ref{sec:proof},
    we have
\(
\sigma_{\min}(M)\ge \sigma_{\min}(\widehat M)-\sqrt{2}\kappa
\ge {\sigma_{\min}(\widehat M)} / {2},
\)
so that
$
\|M^\dagger\|_2=\sigma_{\min}(M)^{-1}
\le {2} / {\sigma_{\min}(\widehat M)}.
$
Therefore,
$
\max\{\|M^\dagger\|_2^2,\|\hat{M}^\dagger\|_2^2\}
\le {4} / \sigma_{\min}  (\hat{M})^2.
$ The rest of the proof proceeds similarly as in Section \ref{sec:proof}, which yields \eqref{eq:error-bound-one-step-known}.
\end{proof} 
\section{Numerical Experiment}\label{sec:numerical}

We evaluate the data-dependent bound \eqref{eq:error-bound-one-step-known} in the Monte Carlo setting of \cite{kaviani2025uncertainty}. 
We generate random stable \gls{LTI} systems of order $n$, with $n\in\{1,2\}$, $\Tini,\Tf\in\{1,2,3\}$, and $p,m$ chosen between $1$ and $n$. 
For each random system, we collect $100$ time steps of offline input-output data using inputs uniformly distributed in $(-1,1)$. 
The offline output data are then corrupted pointwise by additive noise $e(t)$ satisfying $\|e(t)\|_2\leq N$, where $N$ is the prescribed output-noise bound and $N \in [10^{-8}, 10^{-3}].$  
Further implementation details follow \cite{kaviani2025uncertainty}. 
Code reproducing the experiment is available at \url{github.com/DianJin-Frederick/subspace_prediction_under_behavioral_perturbation}.

Our first goal is to compare \eqref{eq:error-bound-one-step-known} with the prediction-error bound in \cite[Theorem 2]{kaviani2025uncertainty}. 
Since the chordal distance $d(\hat{\bmU},\bmU)$ may be unknown in practice, our second goal is to evaluate a fully computable version of \eqref{eq:error-bound-one-step-known} by replacing $d(\hat{\bmU}, \bmU)$ with the computable chordal distance bound
\begin{equation}\label{eq:alsalti-bound}
    d(\hat{\bmU}, \bmU) \leq \frac{C_{\theta}}{\delta_1 \delta_2} N,
\end{equation} 
where $C_\theta, \delta_1, \delta_2$ can be computed from data and are defined in \cite[Theorem 9]{alsalti2024robust}. 
Substituting \eqref{eq:alsalti-bound} into \eqref{eq:error-bound-one-step-known} removes the only non-computable term $d(\hat{\bmU},\bmU)$ and yields a prediction-error certificate computable from noisy data and the noise bound $N$.
We use the relative gap defined in \cite{kaviani2025uncertainty}, which measures the percentage difference between the right-hand side and the left-hand side of the corresponding prediction-error bound, normalized by the output magnitude.
Denote 
$\mathrm{L} = \norm{y_0^\pred(\hat{\bmU}) - y_0^\pred(\bmU)}_2$ and $\mathrm{R} = \left( 
    \frac{2(1+\sqrt{5})\|\hat{U}^{(1)}_\yf\|_2}{\sigma_{\min}(\hat{M})^2} + 
    \frac{1}{\sigma_{\min}(\hat{M})}
    \right)
    \sqrt{2}d(\hat{\bmU}, \bmU)
    \|b\|_2$, 
then 
\[
\text{relative gap}
\coloneqq
\frac{\mathrm{R}-\mathrm{L}}
{\|y_0^\pred(\bmU)\|_2}.
\]

\begin{figure}[h]
    \centering
    \includegraphics[width=\columnwidth]{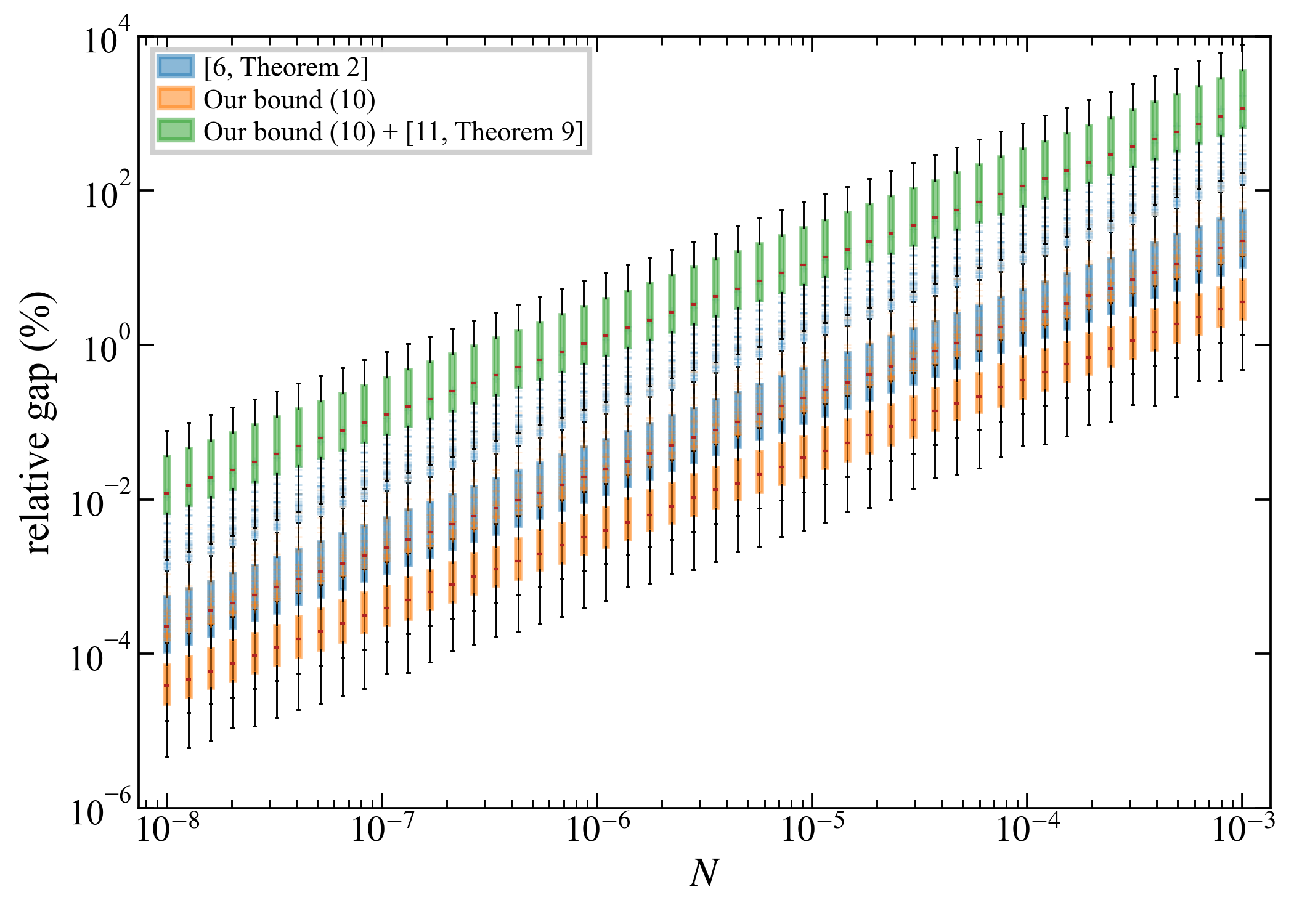}
    \caption{Relative gaps of prediction-error certificates versus the output-noise bound $N$. Boxes show the median and interquartile range; whiskers and crosses indicate nonoutlier ranges and outliers, respectively. 
    For readability, extreme outliers of the fully computable bound (green) are not shown.
    }
    \label{fig:box-plot}
\end{figure}
\vspace{10pt}
\noindent
\textbf{Results.} Fig. \ref{fig:box-plot} shows that \eqref{eq:error-bound-one-step-known}, when evaluated with the true chordal distance, has a smaller median relative gap than \cite[Theorem 2]{kaviani2025uncertainty} across all tested noise bounds. 
Thus, when the behavioral distance is available or accurately estimated, our chordal-distance-based bound gives a tighter prediction-error certificate than the data-matrix perturbation bound of \cite{kaviani2025uncertainty}. 
When the true chordal distance is replaced by the computable bound \eqref{eq:alsalti-bound}, the resulting certificate remains fully computable but becomes more conservative. 
This is expected, since the error is bounded in two stages: first from noisy data to a chordal-distance bound, and then from chordal distance to prediction error. 
The omitted green outliers correspond to cases where the chordal-distance bound \eqref{eq:alsalti-bound} is very conservative \cite[Theorem 14]{alsalti2024robust}. 
Thus, the experiment illustrates both the tightness of \eqref{eq:error-bound-one-step-known} over the baseline and the additional conservatism introduced when the bound is made fully computable.

\section{Conclusion}
This work shows two fundamental properties of the subspace predictor. 
First, we showed that it is  coordinate-free, in the sense that it depends only on the restricted behavior of a system. 
Second, we derived an explicit prediction error bound  directly in terms of the chordal distance between two restricted behaviors. 
This provides a quantitative link between geometric uncertainty and prediction accuracy.
Future work includes combining the results with computable subspace-tracking bounds and extending the analysis from prediction to closed-loop data-driven control. 
We hope that the  coordinate-free  bound developed will serve as a useful building block for 
more data-driven control methods.

\section*{Acknowledgement}
The authors used ChatGPT to assist with language refinement and editing in parts of this manuscript. All generated content was subsequently reviewed and edited by the authors, who take full responsibility for the final manuscript.